\newtheorem{theorem}{Theorem}
\theoremstyle{plain}
\newtheorem{corollary}{Corollary}
\newtheorem{definition}{Definition}
\newtheorem{example}{Example}
\newtheorem{lemma}{Lemma}
\numberwithin{equation}{section}
\begin{document}
\title[The relation between parameter curves and...]{The relation between
parameter curves and lines of curvature on canal surfaces}
\author{Fatih Do\u{g}an and Yusuf Yayl\i }
\address{\textit{Current Adress: Fatih} \textit{DO\u{G}AN}, \textit{Yusuf} 
\textit{YAYLI}, \textit{Ankara University, Department of Mathematics, 06100
Tando\u{g}an, Ankara, Turkey}}
\email{mathfdogan@hotmail.com, yayli@science.ankara.edu.tr}
\date{}
\subjclass[2000]{53A04, 53A05}
\keywords{Parameter curve, Line of curvature, Generalized tube, Canal surface%
}

\begin{abstract}
A canal surface is the envelope of a moving sphere with varying radius,
defined by the trajectory $C(t)$ (spine curve) of its center and a radius
function $r(t)$. In this paper, we investigate when parameter curves of the
canal surface are also lines of curvature. Last of all, for special spine
curves we obtain the radius function of canal surfaces.
\end{abstract}

\maketitle

\section{Introduction}

A canal surface is defined as the envelope of a\ family of one parameter
spheres and is useful for representing long thin objects e.g. pipes, poles,
ropes, 3D fonts or intestines of body. Canal surfaces are also frequently
used in solid and surface modelling for CAD/CAM. Representative examples are
natural quadrics, tori, tubular surfaces and Dupin cyclides.

A curve on a surface which has the property that its tangent at each of its
points $p$ coincides with a principal direction at $p$ is called line of
curvature put differently%
\begin{equation*}
S(T)=k_{n}T,
\end{equation*}%
where $S$ is the shape operator of the surface, $T$ is the tangent vector
field of the curve and $k_{n}$ is the normal curvature along the curve on
the surface.

Maekawa $et.al.$ $[8]$ researched necessary and sufficient conditions for
the regularity of pipe (tube) surfaces. More recently, Xu $et.al.$ $[10]$
studied these conditions for canal surfaces and examined principle geometric
properties of canal surfaces like computing the area and Gaussian curvature
of them.

Gross $[2]$ gave the concept of generalized tubes (briefly GT) and
classified them in two types as ZGT and CGT. Here, ZGT refers to the spine
curve (the axis) that has torsion-free and CGT refers to tube that has
circular cross sections. He investigated the properties of GT and showed
that parameter curves of a generalized tube are also lines of curvature if
and only if the spine curve is planar. In this study, we examine when $s-$
and $\theta -$parameter curves of the canal surface are also lines of
curvature.

This paper is organized as follows. In section 2, we introduce a canal
surface and give basic notions about it. Then we observe parameter curves of
generalized tubes which are also lines of curvature and look through this
property on canal surfaces in section 3. Furthermore, we obtain the radius
function of the canal surface that $s-$parameter curves are lines of
curvature at the same time. In section 4, we conclude this paper.

\section{Preliminaries}

First of all, we present a canal surface and give some coefficients of the
first and second fundamental form of it. Subsequently, we mention tube and
generalized tube.

\begin{definition}
Canal surface is defined as the envelope of a\ family of one parameter
spheres. Alternatively, a canal surface is the envelope of a moving sphere
with varying radius, defined by the trajectory $C(t)$ (spine curve) of its
center and a radius function $r(t)$. When $r^{^{\prime }}(t)<\left \Vert
C^{^{\prime }}(t)\right \Vert $, the canal surface is regular and
parametrized as follows.%
\begin{equation*}
(2.1)\text{ \ }K(t,\theta )=C(t)-r(t)r^{^{\prime }}(t)\frac{C^{^{\prime }}(t)%
}{\left \Vert C^{^{\prime }}(t)\right \Vert ^{2}}\mp r(t)\frac{\sqrt{\left
\Vert C^{^{\prime }}(t)\right \Vert ^{2}-r^{^{\prime }}(t)^{2}}}{\left \Vert
C^{^{\prime }}(t)\right \Vert }\left( \cos \theta N+\sin \theta B\right) .
\end{equation*}%
If the spine curve $C(t)$ has arclenght parametrization ($\left \Vert
C^{^{\prime }}(t)\right \Vert =1$), then the canal surface is reparametrized
as $[1]$%
\begin{equation*}
(2.2)\text{ \  \  \  \  \  \  \  \ }K(s,\theta )=C(s)-r(s)r^{^{\prime }}(s)T(s)\mp
r(s)\sqrt{1-r^{^{\prime }}(s)^{2}}\cos \theta N(s)+\sin \theta B(s)
\end{equation*}%
where $\{T,$ $N,$ $B\}$ is the Frenet frame of the spine curve. The normal
vector field and some coefficients of first and second fundamental form of
the canal surface are as follows $[10]$.%
\begin{eqnarray*}
N &=&\frac{K_{s}\times K_{\theta }}{\left \Vert K_{s}\times K_{\theta
}\right \Vert } \\
N &=&\frac{\left( g^{^{\prime }}-h\kappa \cos \theta \right) T+\left( \kappa
g\cos \theta +h^{^{\prime }}-1\right) \cos \theta N+\left( \kappa g\cos
\theta +h^{^{\prime }}-1\right) \sin \theta B}{\sqrt{\left( \kappa g\cos
\theta +h^{^{\prime }}-1\right) ^{2}+\left( g^{^{\prime }}-h\kappa \cos
\theta \right) ^{2}}}
\end{eqnarray*}%
\begin{equation*}
E=K_{s}\centerdot K_{s}=\left( \kappa g\cos \theta +h^{^{\prime }}-1\right)
^{2}+\left( g\tau +h\kappa \sin \theta \right) ^{2}+\left( g^{^{\prime
}}-h\kappa \cos \theta \right) ^{2}
\end{equation*}%
\begin{equation}
F=K_{s}\centerdot K_{\theta }=g\left( g\tau +h\kappa \sin \theta \right) 
\tag{2.3}
\end{equation}%
\begin{equation*}
G=K_{\theta }\centerdot K_{\theta }=g^{2}
\end{equation*}%
\begin{equation}
\left \Vert K_{s}\times K_{\theta }\right \Vert ^{2}=EG-F^{2}=g^{2}\left(
\left( \kappa g\cos \theta +h^{^{\prime }}-1\right) ^{2}+\left( g^{^{\prime
}}-h\kappa \cos \theta \right) ^{2}\right)  \tag{2.4}
\end{equation}%
\begin{eqnarray*}
f &=&N\centerdot K_{s\theta }=\frac{1}{\sqrt{\left( \kappa g\cos \theta
+h^{^{\prime }}-1\right) ^{2}+\left( g^{^{\prime }}-h\kappa \cos \theta
\right) ^{2}}} \\
&&\times \left[ \left( g^{^{\prime }}-h\kappa \cos \theta \right) \kappa
g\sin \theta -\tau g\left( \kappa g\cos \theta +h^{^{\prime }}-1\right) %
\right] ,
\end{eqnarray*}%
where $h=r(s)r^{^{\prime }}(s)\neq 0$ and $g=r(s)\sqrt{1-r^{^{\prime
^{2}}}(s)}\neq 0$. If the radius function $r(s)=r$ is a constant, then the
canal surface is called a tube or pipe surface and it is written as%
\begin{equation*}
K(s,\theta )=C(s)+r\left( \cos \theta N(s)+\sin \theta B(s)\right) .
\end{equation*}
\end{definition}

\section{Some Characterizations for Lines of Curvature on Canal Surface}

In this section, to begin with we inspect generalized tube. We observe when
parameter curves are also lines of curvature on a generalized tube $[2]$.
After that, we investigate the same case on a canal surface. Finally, we get
some characterizations of canal surfaces around special spine curves.

\begin{definition}
The parameterization of generalized tube around the spine curve $\Gamma (s)$
is%
\begin{equation}
X(s,\theta )=\Gamma (s)+u(\theta )\left( \cos \theta N(s)+\sin \theta
B(s)\right) ,\text{ }0\leq \theta <2\pi  \tag{3.1}
\end{equation}%
where $u$ is twice differentiable, $u(\theta )>0$ and $u(0)=u(2\pi )$.
\end{definition}

\begin{definition}
Let $M$ be a surface and let the curve $\alpha :I\subset R\longrightarrow M$%
. Then%
\begin{equation*}
(fE-eF)(u^{^{\prime }})^{2}+(gE-eG)u^{^{\prime }}v^{^{\prime
}}+(gF-fG)(v^{^{\prime }})^{2}=0
\end{equation*}%
is called as the differential equation of lines of curvature on $M$ $[6]$.
\end{definition}

\begin{theorem}
A necessary and sufficient condition for the parameter curves of a surface
to be lines of curvature in a neighborhood of a nonumbilical point is that $%
F=f=0$ where $F$ and $f$ are the respective the first and second fundamental
coefficients $[6]$.
\end{theorem}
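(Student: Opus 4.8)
The plan is to substitute the two families of parameter curves directly into the differential equation of lines of curvature recorded just above (Definition 3) and read off the algebraic conditions it imposes on the fundamental coefficients. Along a $u$-parameter curve the value $v$ is constant, so $v'=0$ while $u'\neq 0$; along a $v$-parameter curve $u$ is constant, so $u'=0$ while $v'\neq 0$. Feeding the first case into the equation collapses it to $(fE-eF)(u')^{2}=0$, forcing $fE-eF=0$, and feeding in the second collapses it to $(gF-fG)(v')^{2}=0$, forcing $gF-fG=0$. Hence both families are lines of curvature simultaneously if and only if
\[
fE-eF=0 \qquad\text{and}\qquad gF-fG=0 .
\]
The remaining work is to show that this pair of equations is equivalent to $F=f=0$ at a nonumbilical point.

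The sufficiency direction is immediate: if $F=0$ and $f=0$, then $fE-eF$ and $gF-fG$ both vanish identically, so each parameter curve satisfies the differential equation and is therefore a line of curvature. No appeal to the nonumbilical hypothesis is needed here.

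For necessity I would argue by contradiction. Assume the two equations hold but $F\neq 0$. Since the surface is regular, $E=X_{u}\cdot X_{u}>0$ and $G=X_{v}\cdot X_{v}>0$, so the divisions below are legitimate. The first equation gives $f/F=e/E$ and the second gives $g/G=f/F$, whence $e/E=f/F=g/G$. This common proportionality of the second fundamental form to the first is precisely the analytic characterization of an umbilical point (the two principal curvatures coincide there), contradicting the hypothesis. Therefore $F=0$, and then $fE-eF=fE=0$ together with $E>0$ yields $f=0$, completing the proof.

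The only genuinely delicate step is this necessity argument, and the subtlety is one of timing: the equations $fE-eF=0$ and $gF-fG=0$ do not on their own force $F=0$, since an umbilic satisfies them with $F\neq 0$. The proof therefore hinges on recognizing that the shared ratio $e/E=f/F=g/G$ signals an umbilical point, so that the nonumbilical hypothesis must be invoked at exactly that moment. I would take the positivity $E,G>0$ as given from regularity, as it is what licenses passing from the two vanishing products to the vanishing of $F$ and $f$ themselves.
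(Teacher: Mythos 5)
Your proof is correct, and it takes a genuinely different (and in one respect more complete) route than the paper's. The paper works through the Weingarten equations: it expresses $S(x_u)$ and $S(x_v)$ in the basis $\{x_u,x_v\}$ and demands that the off-diagonal coefficients vanish, yielding $eF-fE=0$ and $gF-fG=0$; you instead substitute the parameter curves ($v'=0$ with $u'\neq 0$, resp.\ $u'=0$ with $v'\neq 0$) into the differential equation of lines of curvature recorded in Definition~3, which collapses to the same two equations. Since that ODE is itself derived from the Weingarten equations, the two decompositions are essentially equivalent up to this point; your version is slightly more elementary in that it never needs the shape operator explicitly. The substantive difference is in the final step: the paper simply asserts ``From this, we have $F=f=0$'' with no justification, and that is precisely where the nonumbilical hypothesis must enter, since at an umbilic the pair $eF-fE=0$, $gF-fG=0$ holds with $F$ possibly nonzero (there $e/E=f/F=g/G$). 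Your contradiction argument---if $F\neq 0$ then $e/E=f/F=g/G$, i.e.\ the second fundamental form is proportional to the first, forcing the point to be umbilical---supplies exactly the reasoning the paper omits, and your appeal to $E,G>0$ from regularity to pass from $fE=0$ to $f=0$ is the correct closing move. In short, your route buys a self-contained argument from the curvature-line ODE plus an explicit account of where nonumbilicity is used, while the paper's route exhibits the principal directions directly (its converse, giving $S(x_u)=\frac{e}{E}x_u$ and $S(x_v)=\frac{g}{G}x_v$, also hands you the principal curvatures for free).
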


\begin{proof}
Weingarten equations are given by%
\begin{eqnarray*}
-S(x_{u}) &=&U_{u}=\frac{fF-eG}{EG-F^{2}}x_{u}+\frac{eF-fE}{EG-F^{2}}x_{v} \\
-S(x_{v}) &=&U_{v}=\frac{gF-fG}{EG-F^{2}}x_{u}+\frac{fF-gE}{EG-F^{2}}x_{v}.
\end{eqnarray*}%
where $E,$ $F,$ $G$ and $e,$ $f,$ $g$ are coefficients of the first and
second fundamental form of a surface, respectively.
\end{proof}

($\Longrightarrow $) Assume that parameter curves in a neighborhood of a
nonumbilical point of a surface are also lines of curvature. In this case,
from the Definition (3) and Weingarten equations we get%
\begin{eqnarray*}
S(x_{u}) &=&-\frac{fF-eG}{EG-F^{2}}x_{u} \\
S(x_{v}) &=&-\frac{fF-gE}{EG-F^{2}}x_{v}
\end{eqnarray*}%
in other words%
\begin{eqnarray*}
eF-fE &=&0 \\
gF-fG &=&0.
\end{eqnarray*}%
From this, we have $F=f=0$.\newline
($\Longleftarrow $) Let $F=f=0$ in a neighborhood of a nonumbilical point of
a surface. By the Weingarten equations it follows that%
\begin{eqnarray*}
S(x_{u}) &=&\frac{e}{E}x_{u} \\
S(x_{v}) &=&\frac{g}{G}x_{v.}
\end{eqnarray*}%
Then, according to the definition of line of curvature $u-$and $v-$parameter
curves become lines of curvature concurrently.

Now, we give some coefficients of the first and second fundamental form of a
generalized tube. The normal vector field $N$ of a generalized tube can be
computed as the cross product of tangent vectors of $\theta -$parameter
curve and $s-$parameter curve and vice versa.%
\begin{eqnarray*}
X_{\theta } &=&\left( 1-\kappa u\cos \theta \right) T-u\tau \sin \theta
N+u\tau \cos \theta B \\
X_{s} &=&(u^{^{\prime }}\cos \theta -u\sin \theta )N+(u^{^{\prime }}\sin
\theta +u\cos \theta )B \\
N &=&X_{\theta }\times X_{s}=uu^{^{\prime }}\tau T+\left( 1-\kappa u\cos
\theta \right) \left[ 
\begin{array}{c}
\left( u^{^{\prime }}\sin \theta +u\cos \theta \right) N \\ 
+\left( u\sin \theta -u^{^{\prime }}\cos \theta \right) B%
\end{array}%
\right] ,
\end{eqnarray*}%
where $\kappa $ and $\tau $ are the curvature and the torsion of the spine
curve, respectively. Then%
\begin{gather}
F=X_{s}\centerdot X_{\theta }=u^{2}\tau  \tag{3.2} \\
f=N\centerdot X_{s\theta }=\frac{1}{\left \Vert N\right \Vert }\tau \left[
\kappa uu^{^{\prime }}\left( u\sin \theta -u^{^{\prime }}\cos \theta \right)
-\left( 1-\kappa u\cos \theta \right) (u^{2}+u^{^{\prime }2})\right] . 
\notag
\end{gather}

\begin{theorem}
The parameter curves of a generalized tube are lines of curvature if and
only if the axis $\Gamma $ is torsion-free, i.e., $\tau =0$.
\end{theorem}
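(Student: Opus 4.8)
The plan is to reduce everything to the characterization proved just above, namely that the parameter curves of a surface are lines of curvature in a neighborhood of a nonumbilical point exactly when $F=f=0$. Since the coefficients $F$ and $f$ for the generalized tube have already been computed in (3.2), the entire problem collapses to reading off when those two quantities vanish. So the structure of the argument is: translate ``parameter curves are lines of curvature'' into ``$F=f=0$,'' and then inspect (3.2).

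First I would dispatch the sufficiency direction. Assuming $\tau=0$, the key observation is that $\tau$ occurs as an overall factor in both expressions of (3.2): we have $F=u^{2}\tau$, and the formula for $f$ is $\tau$ times a bracketed quantity divided by $\left\Vert N\right\Vert$. Hence $\tau=0$ forces $F=0$ and $f=0$ simultaneously, and the characterization theorem then yields that the $s$- and $\theta$-parameter curves are concurrently lines of curvature. For the necessity direction, suppose the parameter curves are lines of curvature. The characterization gives $F=0$, that is $u^{2}\tau=0$; since the generalized tube satisfies $u(\theta)>0$ by Definition (3.1), the factor $u^{2}$ is strictly positive, and therefore $\tau=0$. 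Notably the converse needs only the vanishing of $F$; the condition $f=0$ carries no extra information here.

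The computation is essentially routine once (3.2) is available, so the real content is structural rather than algebraic. The point I would be most careful about is the nonumbilical hypothesis underlying the characterization theorem: one must ensure it holds along the parameter curves in question for the equivalence with $F=f=0$ to be legitimate. I would also take care that the vanishing of $F$ is imposed identically in $\theta$ (and in $s$), rather than at isolated parameter values, so that the conclusion $\tau=0$ propagates along the whole spine curve $\Gamma$ instead of holding at a single point. With those points noted, both implications follow directly from the factored form of $F$ and $f$ in (3.2).
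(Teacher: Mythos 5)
Your proposal is correct and takes essentially the same route as the paper's own proof: both directions reduce to the characterization $F=f=0$ from Theorem 1, with sufficiency read off from $\tau$ being a common factor in (3.2) and necessity obtained from $F=u^{2}\tau=0$ together with $u(\theta)>0$. Your added remarks about the nonumbilical hypothesis and about $F$ vanishing identically rather than at isolated parameter values are sensible refinements the paper leaves implicit, but they do not alter the argument.
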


\begin{proof}
Let the spine curve $\Gamma $ be a plane curve that is $\tau =0$. In that
case, by Eq (3.2) $F=f=0$. Conversely, let the parameter curves be also
lines of curvature. Therefore, from Theorem (1)\ $F=f=0$. Since $F=u^{2}\tau
=0$ and $u>0$, we have $\tau =0$, i.e, $\Gamma $ is a plane curve.\newline
From now on, we will look into the same case on a canal surface. For one
thing, we give an important lemma as regards regularity of a canal surface.
\end{proof}

\begin{lemma}[{$[10]$}]
For a canal surface, when $\kappa (s_{0})g(s_{0})\cos \theta
_{0}+h^{^{\prime }}(s_{0})-1=0$, $g^{^{\prime }}(s_{0})-h(s_{0})\kappa
(s_{0})\cos \theta _{0}=0$ where $s_{0}\in $ $\left[ 0,l\right] $ and $%
\theta _{0}\in \lbrack 0,2\pi )$.
\end{lemma}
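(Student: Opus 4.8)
The plan is to uncover a differential identity linking the two expressions, so that the vanishing of the first forces the vanishing of the second. The key observation is that the auxiliary functions $g$ and $h$ are not independent. From the definitions $h = rr'$ and $g = r\sqrt{1 - r'^2}$ one computes directly that $g^2 = r^2 - r^2 r'^2$ while $h^2 = r^2 r'^2$, so that $g^2 + h^2 = r^2$. This single Pythagorean relation is the heart of the argument, and spotting it is the only genuinely nontrivial step.

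Next I would differentiate $g^2 + h^2 = r^2$ with respect to arclength, obtaining $2gg' + 2hh' = 2rr'$. Since $rr' = h$ by definition, this simplifies at once to $gg' + hh' = h$, and hence $gg' = h(1 - h')$. This is the bridge between $g'$ and $h'$ that I will exploit; one can also verify it by differentiating $g = r\sqrt{1-r'^2}$ by brute force, but routing through $g^2+h^2=r^2$ is cleaner.

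Now I would bring in the hypothesis. At the point $(s_0,\theta_0)$ we are given $\kappa(s_0) g(s_0)\cos\theta_0 + h'(s_0) - 1 = 0$, which I rewrite as $1 - h'(s_0) = \kappa(s_0) g(s_0)\cos\theta_0$. Substituting this into the identity $gg' = h(1 - h')$ evaluated at $s_0$ yields $g(s_0)g'(s_0) = h(s_0)\,\kappa(s_0) g(s_0)\cos\theta_0$. Because $g(s_0)\neq 0$ is part of the standing hypotheses on a canal surface, I may cancel the common factor $g(s_0)$ to conclude $g'(s_0) = h(s_0)\kappa(s_0)\cos\theta_0$, i.e. $g'(s_0) - h(s_0)\kappa(s_0)\cos\theta_0 = 0$, which is exactly the claimed conclusion.

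Thus the whole proof reduces to the identity $g^2 + h^2 = r^2$ followed by a one-line substitution; there is no serious obstacle once that relation is recognized. The only point demanding care is the cancellation of $g(s_0)$, which is legitimate precisely because the assumption $g\neq 0$ in the definition of the canal surface guarantees $g$ does not vanish.
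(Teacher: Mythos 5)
Your proof is correct and follows essentially the same route as the paper: both rest on the identity $g g' + h(h'-1) = 0$ (which you derive transparently via $g^2 + h^2 = r^2$, while the paper states it directly from $h = rr'$ and $g = r\sqrt{1-r'^2}$) and then eliminate the hypothesis term and cancel the nonvanishing factor $g(s_0)$. The only difference is cosmetic: the paper multiplies the hypothesis by $h(s_0)$ and factors out $g(s_0)$, whereas you substitute $1 - h' = \kappa g \cos\theta_0$ into the identity, which is the same algebra in a different order.
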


\begin{proof}
Since $h=rr^{^{\prime }}\neq 0$ and $g=r\sqrt{1-r^{^{\prime }}{}^{2}}\neq 0$%
, we obtain%
\begin{equation*}
h(s_{0})(h^{^{\prime }}(s_{0})-1)=-g(s_{0})g^{^{\prime }}(s_{0}).
\end{equation*}%
If $\kappa (s_{0})g(s_{0})\cos \theta _{0}+h^{^{\prime }}(s_{0})-1=0$, then%
\begin{eqnarray*}
h(s_{0})\left( \kappa (s_{0})g(s_{0})\cos \theta _{0}+h^{^{\prime
}}(s_{0})-1\right) &=&0 \\
h(s_{0})\kappa (s_{0})g(s_{0})\cos \theta _{0}+h(s_{0})(h^{^{\prime
}}(s_{0})-1) &=&0 \\
h(s_{0})\kappa (s_{0})g(s_{0})\cos \theta _{0}-g(s_{0})g^{^{\prime }}(s_{0})
&=&0 \\
g(s_{0})\left( h(s_{0})\kappa (s_{0})\cos \theta _{0}-g^{^{\prime
}}(s_{0})\right) &=&0.
\end{eqnarray*}%
In the last equation, as $g(s_{0})\neq 0$, $h(s_{0})\kappa (s_{0})\cos
\theta _{0}-g^{^{\prime }}(s_{0})=0$. This completes the proof.\newline
Thus, from Eq (2.4) and Lemma (1) it follows that $K_{s}\times K_{\theta }=0$%
. Then, the canal surface is singular at the points $p=K(s_{0},\theta _{0})$
that is to say, when $\kappa g\cos \theta +h^{^{\prime }}-1\neq 0$ the canal
surface is regular.
\end{proof}

\begin{theorem}
For a regular canal surface,%
\begin{equation*}
F=0\iff f=0.
\end{equation*}
\end{theorem}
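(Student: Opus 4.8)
The plan is to show that on a regular canal surface $F$ and $f$ differ only by a factor that is nowhere zero, so that one vanishes exactly when the other does. The whole argument rests on a single algebraic identity relating $g$ and $h$. Since $h=rr'$ and $g=r\sqrt{1-r'^{2}}$, one has $g^{2}+h^{2}=r^{2}$; differentiating gives $gg'+hh'=rr'=h$, that is,
\begin{equation*}
gg'+h(h'-1)=0.
\end{equation*}
This is the relation already used implicitly in the proof of Lemma 1, and it is the engine of the argument.

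Next I would introduce the shorthand $A:=\kappa g\cos\theta+h'-1$ and $D:=g'-h\kappa\cos\theta$, which are exactly the quantities appearing under the radical in the normal field and in $f$. By the regularity discussion following Lemma 1, $A\neq0$ at every point of a regular canal surface; together with $g\neq0$ this makes the radical $\sqrt{A^{2}+D^{2}}$ nonzero as well. From (2.3) I would record $F=g(g\tau+h\kappa\sin\theta)$, and from the stated formula for $f$ the bracketed numerator equals $g(D\kappa\sin\theta-\tau A)$, so $f=g(D\kappa\sin\theta-\tau A)/\sqrt{A^{2}+D^{2}}$.

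The main computation is to express $f$ through $F$. Setting $\Phi:=g\tau+h\kappa\sin\theta$ (so that $F=g\Phi$ and $g\tau=\Phi-h\kappa\sin\theta$), I would substitute for $g\tau$ in the bracket of $f$ and collect terms, which gives
\begin{equation*}
g(D\kappa\sin\theta-\tau A)=\kappa\sin\theta\,(gD+hA)-\Phi A.
\end{equation*}
Expanding the cross term, $gD+hA=g(g'-h\kappa\cos\theta)+h(\kappa g\cos\theta+h'-1)=gg'+h(h'-1)$, so by the displayed identity it vanishes identically. Hence $g(D\kappa\sin\theta-\tau A)=-\Phi A=-(A/g)F$, and therefore
\begin{equation*}
f=\frac{-A}{g\sqrt{A^{2}+D^{2}}}\,F.
\end{equation*}
Since the proportionality factor $-A/(g\sqrt{A^{2}+D^{2}})$ is finite and nonzero on a regular canal surface, we conclude $F=0\iff f=0$.

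I expect the only delicate point to be the cancellation $gD+hA=gg'+h(h'-1)=0$: the mixed terms $-gh\kappa\cos\theta$ and $+h\kappa g\cos\theta$ must cancel, and the surviving part must be precisely the left-hand side of the fundamental identity, which is where the specific structure of the canal surface (the constraint $g^{2}+h^{2}=r^{2}$) enters. Everything else is bookkeeping that uses only $g\neq0$ and the regularity condition $A\neq0$.
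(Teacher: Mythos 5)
Your proof is correct and rests on exactly the same ingredients as the paper's: the identity $gg'+h(h'-1)=0$ coming from $g^{2}+h^{2}=r^{2}$, and the regularity condition $A=\kappa g\cos\theta+h'-1\neq 0$ established after Lemma 1. The only difference is presentational: where the paper argues the two implications separately (substituting $g\tau=-h\kappa\sin\theta$ into $f$ for one direction, and factoring the numerator of $f$ as $-A(g\tau+h\kappa\sin\theta)$ for the other), you package the same cancellation into the single proportionality $f=\frac{-A}{g\sqrt{A^{2}+D^{2}}}F$, which delivers both directions at once.
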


\begin{proof}
Assume that $F=0$. Then, by Eq (2.3) we have $g\left( g\tau +h\kappa \sin
\theta \right) =0$. In that $g\neq 0$, $g\tau =-h\kappa \sin \theta $. If we
substitute the last equality in the expression of $f$, we gather that%
\begin{equation*}
f=\frac{1}{\sqrt{\left( \kappa g\cos \theta +h^{^{\prime }}-1\right)
^{2}+\left( g^{^{\prime }}-h\kappa \cos \theta \right) ^{2}}}\left[ \left(
gg^{^{\prime }}+h(h^{^{\prime }}-1)\right) \kappa \sin \theta \right] .
\end{equation*}%
Since $gg^{^{\prime }}+h(h^{^{\prime }}-1)=0$, $f=0$. On the contrary,
assume that $f=0$. In this case,%
\begin{equation*}
\left( g^{^{\prime }}-h\kappa \cos \theta \right) \kappa g\sin \theta -\tau
g\left( \kappa g\cos \theta +h^{^{\prime }}-1\right) =0.
\end{equation*}%
If we arrange this equality, due to the fact that $gg^{^{\prime
}}+h(h^{^{\prime }}-1)=0$ it concludes%
\begin{equation*}
\left( \kappa g\cos \theta +h^{^{\prime }}-1\right) \left( g\tau +h\kappa
\sin \theta \right) =0.
\end{equation*}%
Since the canal surface is regular, $\kappa g\cos \theta +h^{^{\prime
}}-1\neq 0$. Then $g\tau +h\kappa \sin \theta =0$ and thus from Eq (2.3) $%
F=0 $.
\end{proof}

\begin{corollary}
The parameter curves are also lines of curvature on a regular canal surface
if and only if $g\tau +h\kappa \sin \theta =0$.
\end{corollary}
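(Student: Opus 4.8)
The plan is to deduce the corollary by chaining together the two theorems already proved together with the explicit formula for $F$ in Eq (2.3). First I would invoke Theorem 1, which characterizes, near a nonumbilical point, when the parameter curves are lines of curvature by the simultaneous vanishing of the two fundamental coefficients, i.e. $F=f=0$. This reduces the corollary to showing that, on a regular canal surface, the condition $F=f=0$ is equivalent to the single scalar equation $g\tau+h\kappa\sin\theta=0$.

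Next I would apply Theorem 3, which states that for a regular canal surface $F=0\iff f=0$. Consequently the pair of conditions $F=f=0$ collapses to the single condition $F=0$: if $F=0$ then $f=0$ follows automatically, and conversely. Hence the parameter curves are lines of curvature precisely when $F=0$. Finally I would substitute the explicit expression from Eq (2.3), namely $F=g\left(g\tau+h\kappa\sin\theta\right)$. Because $g=r\sqrt{1-r'^{2}}\neq 0$ by the standing hypothesis of the definition, the factor $g$ may be cancelled, so $F=0$ holds if and only if $g\tau+h\kappa\sin\theta=0$. Combining these equivalences gives the claim.

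I do not expect a genuine obstacle here, since every ingredient is already established in the excerpt; the statement is essentially a packaging of Theorem 1, Theorem 3 and Eq (2.3). The only points needing care are verifying that we are at a nonumbilical point so that Theorem 1 is applicable, and confirming $g\neq 0$ so that the cancellation in the final step $F=0\iff g\tau+h\kappa\sin\theta=0$ is legitimate; both are guaranteed by the regularity assumption and the conditions $g\neq 0$, $h\neq 0$ imposed in the definition.
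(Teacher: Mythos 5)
Your proposal is correct and is exactly the argument the paper intends: the corollary is stated without an explicit proof precisely because it is the immediate combination of Theorem 1 ($F=f=0$ characterizes parameter curves as lines of curvature), Theorem 3 ($F=0\iff f=0$ on a regular canal surface), and Eq (2.3) with $g\neq 0$ allowing cancellation of the factor $g$. Your added care about the nonumbilical hypothesis of Theorem 1 and the nonvanishing of $g$ matches the paper's standing assumptions, so there is nothing to correct.
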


\begin{corollary}
$\theta -$parameter curves of the regular canal surface cannot also be lines
of curvature.
\end{corollary}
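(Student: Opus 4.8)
The plan is to first make precise what it means for a single $\theta$-parameter curve to be a line of curvature, and then to show the resulting condition cannot hold once $\kappa\neq0$. Fix $s=s_{0}$, so that the $\theta$-parameter curve is $\theta\mapsto K(s_{0},\theta)$; along it $ds=0$ while $d\theta\neq0$. Inserting $ds=0$ into the differential equation of lines of curvature of Definition 3 kills every term except the last, leaving the single requirement $\bar{g}F-fG=0$, where $\bar{g}=N\centerdot K_{\theta\theta}$ denotes the second fundamental coefficient in the $\theta$-direction. The essential point is that this must hold \emph{for all} $\theta\in[0,2\pi)$, because one $\theta$-parameter curve runs through every value of $\theta$ at the fixed $s_{0}$.

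The computational core is to produce $\bar{g}$, which is not listed among the coefficients of Section 2. Differentiating $K_{\theta}$ once more I get $K_{\theta\theta}=g\left( \cos \theta N+\sin \theta B\right)$, and since $T$ is orthogonal to $\cos\theta N+\sin\theta B$, dotting with the unit normal gives $\bar{g}=g\left( \kappa g\cos \theta +h^{\prime }-1\right)/W$, where $W=\sqrt{\left( \kappa g\cos \theta +h^{\prime }-1\right)^{2}+\left( g^{\prime }-h\kappa \cos \theta \right)^{2}}$. Substituting this together with $F=g\left( g\tau +h\kappa \sin \theta \right)$, $G=g^{2}$ and the given $f$ into $\bar{g}F-fG$, and simplifying with the relation $gg^{\prime }+h\left( h^{\prime }-1\right)=0$ (which comes from $g^{2}+h^{2}=r^{2}$, exactly as in the proof of Lemma 1), I expect everything to cancel down to
\begin{equation*}
\bar{g}F-fG=-2g^{2}f.
\end{equation*}
As $g\neq0$, this identity says that $K_{\theta}$ is a principal direction precisely when $f=0$, and Theorem 3 then upgrades $f=0$ to $F=0$.

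The conclusion now follows quickly. If the $\theta$-parameter curve at $s_{0}$ were a line of curvature we would need $F=g\left( g\tau +h\kappa \sin \theta \right)=0$, hence $g\tau +h\kappa \sin \theta =0$, for every $\theta$. Comparing the coefficients of the linearly independent functions $1$ and $\sin\theta$ forces $g\tau=0$ and $h\kappa=0$ at $s_{0}$; since $g\neq0$ and $h\neq0$ this yields $\tau(s_{0})=0$ and $\kappa(s_{0})=0$. But a nonvanishing curvature $\kappa$ is exactly what is needed for the Frenet frame $\{T,N,B\}$, and hence for the parametrization $(2.2)$, to be defined, so $\kappa(s_{0})=0$ is absurd. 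Therefore no $\theta$-parameter curve can be a line of curvature.

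The main obstacle is the middle paragraph: because $\bar{g}$ is not provided, one has to compute $K_{\theta\theta}$ and $N\centerdot K_{\theta\theta}$ by hand and then push through the cancellation that produces $\bar{g}F-fG=-2g^{2}f$. That algebra is where a slip is most likely, and the one nontrivial ingredient that makes it collapse is the identity $gg^{\prime }+h\left( h^{\prime }-1\right)=0$ inherited from $g^{2}+h^{2}=r^{2}$. Once that identity is in place, the linear-independence argument in $\theta$ and the contradiction $\kappa=0$ are routine.
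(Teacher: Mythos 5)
Your overall framework---specializing the line-of-curvature equation of Definition 3 to $ds=0$, so that a \emph{single} $\theta$-parameter curve is a line of curvature iff $\bar{g}F-fG=0$ along it---is the logically correct way to treat one family of parameter curves alone, and in that respect it is more careful than the paper's own proof, which substitutes the $\theta$-curves into $g\tau +h\kappa \sin \theta =0$, i.e.\ into the criterion $F=f=0$ of Theorem 1; that criterion characterizes \emph{both} families being lines of curvature simultaneously and is not a necessary condition for the $\theta$-family by itself. However, your computational core fails exactly at the spot you flagged as the danger point: the sign of $K_{\theta \theta }$. The coefficients listed in Section 2 (the normal, $E$, $F$, $f$) correspond to the branch $K=C-hT+g\left( \cos \theta N+\sin \theta B\right) $ of the $\mp $ sign in (2.2), as one checks by recomputing $K_{s}\times K_{\theta }$; on that branch $K_{\theta }=g\left( -\sin \theta N+\cos \theta B\right) $ and hence $K_{\theta \theta }=-g\left( \cos \theta N+\sin \theta B\right) $, so $\bar{g}=-g\left( \kappa g\cos \theta +h^{\prime }-1\right) /W$. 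Your $K_{\theta \theta }=+g\left( \cos \theta N+\sin \theta B\right) $ belongs to the opposite branch, while your $N$, $F$, $G$, $f$ belong to the paper's branch; mixing them produces the spurious identity $\bar{g}F-fG=-2g^{2}f$. With the consistent sign, writing $A=\kappa g\cos \theta +h^{\prime }-1$ and $B_{0}=g^{\prime }-h\kappa \cos \theta $, the same cancellation gives
\begin{equation*}
\bar{g}F-fG=-\frac{g^{2}\kappa \sin \theta }{W}\left( hA+gB_{0}\right) =-
\frac{g^{2}\kappa \sin \theta }{W}\bigl(gg^{\prime }+h(h^{\prime
}-1)\bigr)=0
\end{equation*}
identically: the very relation $gg^{\prime }+h(h^{\prime }-1)=0$ that you invoke makes the left side vanish for every $(s,\theta )$ rather than reduce to $-2g^{2}f$. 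Your necessary condition is therefore vacuous, and the linear-independence argument in $\theta $ and the contradiction $\kappa =0$ never get off the ground.

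The failure is not incidental: carried out with consistent signs, your method establishes the \emph{opposite} of the stated corollary, since $\bar{g}F-fG\equiv 0$ says that every circle $s=s_{0}$ satisfies the line-of-curvature equation. This agrees with the classical picture: $K(s_{0},\theta )-C(s_{0})=-hT+g\left( \cos \theta N+\sin \theta B\right) $ is parallel to the listed unit normal precisely because $hA+gB_{0}=0$, so the canal surface is tangent to the generating sphere along each characteristic circle, and Joachimsthal's theorem then makes these circles lines of curvature --- compare Vessiot $[4]$ and Garcia, Llibre and Sotomayor $[7]$, where the characteristic circles constitute one of the two principal families, the other being given by the ODE (3.5) quoted in the paper's Example. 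So the comparison comes out as follows: the paper's proof is short but rests on applying a two-family criterion to a single family; your blind attempt chose the correct single-curve criterion, but a branch-mixing sign error in $\bar{g}$ manufactured the identity $\bar{g}F-fG=-2g^{2}f$ that the argument needs, and once it is corrected your computation refutes the corollary rather than proving it.
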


\begin{proof}
$\theta -$parameter curves are not a solution of the equation $g\tau
+h\kappa \sin \theta =0$ because when $s=s_{0}$ is a constant, $g$, $\tau $, 
$h$ and $\kappa $ are constants but $\sin \theta $ is not. For this reason, $%
\theta -$parameter curves of the canal surface cannot also be lines of
curvature.\newline
From this time, we will inspect the equation $g\tau +h\kappa \sin \theta =0$
which solves the problem when $s-$parameter curves of the canal surface are
also lines of curvature. Here, we have a look at two different cases for
this equation. These cases are as below.\newline
\textbf{The Case 1} If $s-$parameter curves which are also lines of
curvature $\theta _{0}=0$ and $\theta _{0}=\pi $ are replaced in the
equation $g\tau +h\kappa \sin \theta =0$, it follows $g\tau =0$. Since $%
g\neq 0$, $\tau =0$ i.e. the spine curve $C(s)$ becomes planar.\newline
\textbf{The Case 2} We go over $s-$parameter curves which are also lines of
curvature except for $\theta _{0}=0,\pi $. If we substitute $g=r\sqrt{%
1-r^{^{\prime }}{}^{2}}$ and $h=rr^{^{\prime }}$ in the equation $g\tau
+h\kappa \sin \theta =0$, we obtain%
\begin{equation*}
g\tau +h\kappa \sin \theta =0
\end{equation*}%
\begin{equation*}
\tau r\sqrt{1-r^{^{\prime }}{}^{2}}=-\kappa rr^{^{\prime }}\sin \theta
\end{equation*}%
\begin{equation}
\tau \sqrt{1-r^{^{\prime }}{}^{2}}=-\kappa r^{^{\prime }}\sin \theta . 
\tag{3.3}
\end{equation}%
If we take square of both sides in Eq (3.3) and then arrange, we get the
radius function $r(s)$ of the canal surface as follows.%
\begin{equation*}
\left( \tau ^{2}+\kappa ^{2}\sin ^{2}\theta \right) r^{^{\prime }2}=\tau ^{2}
\end{equation*}%
\begin{equation}
r^{^{\prime }}=\frac{\left \vert \tau \right \vert }{\sqrt{\tau ^{2}+\kappa
^{2}\sin ^{2}\theta }}  \notag
\end{equation}%
\begin{equation}
r(s)=\dint \frac{\left \vert \tau (s)\right \vert }{\sqrt{\tau
^{2}(s)+\kappa ^{2}(s)\sin ^{2}\theta }}ds+c.  \tag{3.4}
\end{equation}
\end{proof}

\begin{corollary}
(1) Let the spine curve $C(s)$ be a general helix. Then $s-$parameter curves
of the regular canal surface are also lines of curvature if and only if the
canal surface is generated by a moving sphere with the linear radius
function $r(s)=as+c$, $c>0$.\newline
(2) Let the spine curve $C(s)$ be a circular helix. For $s-$parameter curves
which are also lines of curvature, the canal surface is generated by a
moving sphere with the linear radius function $r(s)=\dfrac{b}{\sqrt{%
b^{2}+a^{2}\sin ^{2}\theta }}s+c$; $c,\tau >0$.
\end{corollary}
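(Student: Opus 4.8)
The plan is to specialize the radius formula (3.4), derived in Case 2 for canal surfaces whose $s$-parameter curves are lines of curvature, to the two prescribed classes of spine curves. The one structural fact driving both parts is the classical Lancret characterization: a unit-speed curve $C(s)$ is a general helix if and only if the ratio $\tau/\kappa$ is a nonzero constant, and it is a circular helix precisely when $\kappa$ and $\tau$ are separately constant.

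For part (1), I would establish both implications through the line-of-curvature criterion $g\tau + h\kappa\sin\theta = 0$ of Corollary 1. For the forward direction, assuming the $s$-parameter curves are lines of curvature puts us in Case 2, so (3.3), namely $\tau\sqrt{1-r'^2} = -\kappa r'\sin\theta$, and hence (3.4) hold. Factoring $\kappa$ out of the square root in the integrand of (3.4) rewrites it as $\frac{|\tau/\kappa|}{\sqrt{(\tau/\kappa)^2 + \sin^2\theta}}$; since $\tau/\kappa$ is constant along a general helix, this integrand is a constant $a$, and (3.4) integrates to $r(s) = as + c$. Positivity of the radius forces $c > 0$, while regularity $r'^2 < 1$ holds automatically because $\sin^2\theta > 0$ away from the values $\theta = 0, \pi$ already excluded in Case 2. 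For the converse, I would substitute a linear $r(s) = as + c$, so that $r' = a$ is constant, back into (3.3); as $\tau/\kappa$ is already constant by the helix hypothesis, the equation collapses to the purely algebraic relation $(\tau/\kappa)\sqrt{1-a^2} + a\sin\theta = 0$, which fixes $a = \frac{|\tau/\kappa|}{\sqrt{(\tau/\kappa)^2+\sin^2\theta}}$ and so confirms that $g\tau + h\kappa\sin\theta = 0$ holds for the matching $\theta$.

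Part (2) is then an immediate specialization, since a circular helix is in particular a general helix and (3.4) applies. Writing $\kappa = a$ and $\tau = b > 0$, both constant, the integrand in (3.4) is simply the constant $\frac{b}{\sqrt{b^2 + a^2\sin^2\theta}}$, and integrating produces $r(s) = \frac{b}{\sqrt{b^2 + a^2\sin^2\theta}}\,s + c$, exactly the claimed form.

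I expect the main obstacle to be stating the equivalence in part (1) precisely rather than any hard computation: the phrase ``the linear radius function $r(s) = as + c$'' must be read as fixing the slope $a$ to the value dictated by $\tau/\kappa$ and $\theta$, not as an arbitrary constant, so the converse genuinely closes only for that slope. The secondary point to watch is the sign lost when (3.3) is squared to obtain (3.4); I would resolve it by verifying the converse against (3.3) itself, whose sign constraint simultaneously fixes the admissible range of $\theta$.
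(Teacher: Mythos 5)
Your proposal takes the same core route as the paper: both specialize Eq.~(3.4) to the case where $\tau/\kappa$ is constant, so that the integrand $\left\vert \tau\right\vert /\sqrt{\tau^{2}+\kappa^{2}\sin^{2}\theta}$ is a constant $a$ and the integral is the linear function $r(s)=as+c$, and both then read off part (2) as the constant-curvature special case. The genuine difference is that the paper's proof carries out \emph{only} the forward computation --- it integrates (3.4) under $\tau/\kappa=\tan\phi$ to get $r(s)=s/\sqrt{1+\cot^{2}\phi\sin^{2}\theta}+c$ and stops --- even though statement (1) is a biconditional; your substitution of the constant slope $r^{\prime}=a$ back into Eq.~(3.3), checking that $(\tau/\kappa)\sqrt{1-a^{2}}+a\sin\theta=0$ holds for the matching $\theta$, supplies exactly the converse half that the paper leaves unargued, and your caveat about the sign lost in squaring (3.3) is the right one, since the paper silently takes $\left\vert \tau\right\vert$ and the positive root of $r^{\prime 2}$, discarding the sign constraint that ties $\operatorname{sgn}(\sin\theta)$ to $-\operatorname{sgn}(\tau r^{\prime})$. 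One small discrepancy in part (2): in the paper, $a$ and $b$ are the \emph{helix parameters} of $C(s)=\left( a\cos\frac{s}{d},a\sin\frac{s}{d},b\frac{s}{d}\right)$ with $\kappa=a/d^{2}$ and $\tau=b/d^{2}$, whereas you set $\kappa=a$ and $\tau=b$ directly; the two conventions yield the identical radius formula only because the integrand of (3.4) is homogeneous of degree zero in $(\kappa,\tau)$, so that $\tau/\sqrt{\tau^{2}+\kappa^{2}\sin^{2}\theta}=b/\sqrt{b^{2}+a^{2}\sin^{2}\theta}$ under either reading --- worth flagging, but harmless to the result.
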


\begin{proof}
(1) By Eq (3.4), we have $r(s)=\dint \dfrac{\left \vert \tau (s)\right \vert 
}{\sqrt{\tau ^{2}(s)+\kappa ^{2}(s)\sin ^{2}\theta }}ds+c$. As the spine
curve $C(s)$ is a general helix, the ratio of its curvatures $\dfrac{\tau }{%
\kappa }=\tan \phi $ is a constant. Because $\theta $ is also a constant, we
obtain the radius function $r(s)$ as shown below.%
\begin{eqnarray*}
r(s) &=&\dint \sqrt{\dfrac{1}{1+\dfrac{\kappa ^{2}(s)}{\tau ^{2}(s)}\sin
^{2}\theta }}ds \\
r(s) &=&\frac{s}{\sqrt{1+\cot ^{2}\phi \sin ^{2}\theta }}+c,\text{ }c>0.
\end{eqnarray*}%
Then the radius function have the linear equation like $r(s)=as+c$ where%
\begin{equation*}
a=\dfrac{1}{\sqrt{1+\cot ^{2}\phi \sin ^{2}\theta }}.
\end{equation*}%
(2)\textbf{\ }If the spine curve $C(s)$ is a circular helix, it can be
parametrized as%
\begin{equation*}
C(s)=\left( a\cos \dfrac{s}{d},a\sin \dfrac{s}{d},b\dfrac{s}{d}\right) ,
\end{equation*}%
where $a=\dfrac{\kappa }{\kappa ^{2}+\tau ^{2}}$, $b=\dfrac{\tau }{\kappa
^{2}+\tau ^{2}}$, $d^{2}=a^{2}+b^{2}$. Because of the fact that the
curvatures $\kappa =\dfrac{a}{d^{2}}$ and $\tau =\dfrac{b}{d^{2}}$, from Eq
(3.4) it gathers%
\begin{equation*}
r(s)=\dfrac{b}{\sqrt{b^{2}+a^{2}\sin ^{2}\theta }}s+c;\text{ }c,\tau >0.
\end{equation*}%
E. Salkowski $[3]$ studied the family of space curves with constant
curvature and non-constant torsion and then Monterde $[5]$ characterized
them as space curves with constant curvature and whose normal vector makes a
constant angle with a fixed line. According to this, the curvature and the
torsion of a Salkowski curve can be given as $\kappa (s)\equiv 1$ and $\tau
(s)=\tan (\arcsin (ms))$ where $m=\dfrac{1}{\tan \phi }$ and $\phi $ is the
angle between principal normal of the curve and the fixed line.
\end{proof}

\begin{corollary}
Let the spine curve $C(s)$ be a Salkowski curve. Then $s-$parameter curves ($%
\theta =$constant) on the regular canal surface are also lines of curvature
if and only if the canal surface is generated by a moving sphere with the
radius function%
\begin{equation*}
r(s)=\frac{1}{\cos ^{2}\theta }\sqrt{\cos ^{2}\theta s^{2}+\sin ^{2}\theta
\tan ^{2}\phi }+c.
\end{equation*}
\end{corollary}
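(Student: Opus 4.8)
The plan is to specialize the general radius formula (3.4) to a Salkowski spine curve and then evaluate the resulting integral in closed form. By Corollary 1 together with Case 2 of the preceding analysis, on a regular canal surface with $\theta\neq 0,\pi$ held fixed the $s$-parameter curves are lines of curvature precisely when the radius function obeys
\[
r(s)=\int \frac{\left\vert \tau(s)\right\vert}{\sqrt{\tau^{2}(s)+\kappa^{2}(s)\sin^{2}\theta}}\,ds+c .
\]
Since (3.4) characterizes exactly the admissible radius functions, both directions of the corollary are obtained at once as soon as I insert the Frenet invariants of a Salkowski curve and perform the integration; no separate argument for the converse is needed.

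First I would substitute the invariants recorded at the close of the previous proof, $\kappa(s)\equiv 1$ and $\tau(s)=\tan(\arcsin(ms))$ with $m=1/\tan\phi$. Using $\sin(\arcsin(ms))=ms$ and $\cos(\arcsin(ms))=\sqrt{1-m^{2}s^{2}}$ turns the torsion into the purely algebraic expression $\tau(s)=ms/\sqrt{1-m^{2}s^{2}}$, so that $\tau^{2}(s)=m^{2}s^{2}/(1-m^{2}s^{2})$.

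The key step is the reduction of the integrand. Placing the two terms under the inner radical over the common denominator $1-m^{2}s^{2}$ gives
\[
\tau^{2}+\sin^{2}\theta=\frac{m^{2}s^{2}+\sin^{2}\theta\,(1-m^{2}s^{2})}{1-m^{2}s^{2}}=\frac{m^{2}s^{2}\cos^{2}\theta+\sin^{2}\theta}{1-m^{2}s^{2}},
\]
and the factor $\sqrt{1-m^{2}s^{2}}$ then cancels against the identical factor appearing in $\left\vert\tau(s)\right\vert$, leaving the much simpler integrand $\left\vert ms\right\vert/\sqrt{m^{2}s^{2}\cos^{2}\theta+\sin^{2}\theta}$. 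This cancellation, together with the accompanying bookkeeping of the absolute values (choosing the branch with $r'>0$ as dictated by (3.4)), is the only delicate point; everything surrounding it is mechanical.

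Finally I would integrate by the substitution $w=m^{2}s^{2}\cos^{2}\theta+\sin^{2}\theta$, for which $dw=2m^{2}s\cos^{2}\theta\,ds$, reducing the problem to $\int w^{-1/2}\,dw$ and producing
\[
r(s)=\frac{1}{\left\vert m\right\vert\cos^{2}\theta}\sqrt{m^{2}s^{2}\cos^{2}\theta+\sin^{2}\theta}+c .
\]
Factoring $m^{2}$ out of the radical cancels the $1/\left\vert m\right\vert$ prefactor, and substituting $1/m^{2}=\tan^{2}\phi$ converts the remaining expression into the claimed form $r(s)=\frac{1}{\cos^{2}\theta}\sqrt{\cos^{2}\theta\,s^{2}+\sin^{2}\theta\tan^{2}\phi}+c$. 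The genuine work is thus confined to the single algebraic simplification of the integrand; the antiderivative itself is a routine $\int u/\sqrt{u^{2}+\mathrm{const}}$ computation.
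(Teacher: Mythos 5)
Your proposal is correct and takes essentially the same approach as the paper: both specialize Eq.\ (3.4) to the Salkowski data $\kappa \equiv 1$, $\tau(s)=\tan (\arcsin (ms))$ with $m=1/\tan \phi$, evaluate the integral in closed form, and obtain the stated radius function. The only cosmetic difference is that you simplify $\tan (\arcsin (ms))=ms/\sqrt{1-m^{2}s^{2}}$ algebraically and integrate directly, whereas the paper performs the two successive substitutions $x=\arcsin (ms)$ and $t=\sin x$, whose composite $t=ms$ reproduces exactly your computation.
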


\begin{proof}
Suppose that the spine curve $C(s)$ is a Salkowski curve. In that case, $%
\kappa (s)\equiv 1$ and $\tau (s)=\tan (\arcsin (ms))$. By using Eq (3.4) we
obtain the radius function as%
\begin{equation*}
r(s)=\int \frac{\tan (\arcsin (ms))}{\sqrt{\tan ^{2}(\arcsin (ms))+\sin
^{2}\theta }}ds.
\end{equation*}%
Here, if we make the changing of variable $x=\arcsin (ms)$ we reach%
\begin{eqnarray*}
&=&\frac{1}{m}\int \frac{\sin x}{\sqrt{\tan ^{2}x+\sin ^{2}\theta }}dx \\
&=&\frac{1}{m}\int \frac{\sin x\cos x}{\sqrt{\sin ^{2}x+\sin ^{2}\theta \cos
^{2}x}}dx.
\end{eqnarray*}%
Again, for the changing of variable $\sin x=t$, we get%
\begin{equation*}
=\frac{1}{m}\int \frac{tdt}{\sqrt{\cos ^{2}\theta t^{2}+\sin ^{2}\theta }}.
\end{equation*}%
In the end,%
\begin{equation*}
\int \frac{\tan (\arcsin (ms))}{\sqrt{\tan ^{2}(\arcsin (ms))+\sin
^{2}\theta }}ds=\frac{1}{m\cos ^{2}\theta }\sqrt{\cos ^{2}\theta
m^{2}s^{2}+\sin ^{2}\theta }+c.
\end{equation*}%
Then, the radius function of the canal surface which is generated by the
Salkowski curve is%
\begin{equation*}
r(s)=\frac{1}{\cos ^{2}\theta }\sqrt{\cos ^{2}\theta s^{2}+\sin ^{2}\theta
\tan ^{2}\phi }+c.
\end{equation*}
\end{proof}

\begin{corollary}
Let $s-$parameter curves on the regular canal surface be also lines of
curvature. If $r(s)$ is an increasing function, for the spine curve $C(s)$,%
\begin{equation*}
-\kappa (s)<\tau (s)<\kappa (s).
\end{equation*}
\end{corollary}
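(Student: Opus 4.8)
The plan is to read the inequality off the line-of-curvature condition that the preceding analysis has already isolated. Since the $s$-parameter curves of the regular canal surface are lines of curvature, Corollary~1 gives $g\tau+h\kappa\sin\theta=0$. In the planar subcase $\theta_{0}=0,\pi$ of Case~1 this forces $\tau=0$, and then $-\kappa<\tau<\kappa$ holds trivially; so the only work is in the remaining range, where the condition is exactly Eq.~(3.3),
\begin{equation*}
\tau\sqrt{1-r'^{2}}=-\kappa r'\sin\theta .
\end{equation*}
All of the geometry has been compressed into this one identity, so the corollary should follow by estimating it under the two standing hypotheses, that $r$ is increasing and that the surface is regular.

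First I would record what the hypotheses give pointwise. Regularity forces $\left\vert r'\right\vert <1$, so $1-r'^{2}>0$ and the square root is a genuine positive number, while $r$ increasing gives $r'(s)>0$. Next I would square Eq.~(3.3) and use $\sin^{2}\theta\leq 1$ to obtain
\begin{equation*}
\tau^{2}\left(1-r'^{2}\right)=\kappa^{2}r'^{2}\sin^{2}\theta\leq \kappa^{2}r'^{2},
\end{equation*}
which rearranges to
\begin{equation*}
\tau^{2}\leq \kappa^{2}\,\frac{r'^{2}}{1-r'^{2}} .
\end{equation*}
From here the target $\tau^{2}<\kappa^{2}$, equivalently $-\kappa<\tau<\kappa$, would follow at once provided $\dfrac{r'^{2}}{1-r'^{2}}<1$, that is provided $r'^{2}<\tfrac12$.

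The hard part will be exactly this last bound on $r'$. The regular and increasing hypotheses by themselves only deliver $0<r'<1$, and going back to the equality in the squared form of Eq.~(3.3) shows that $\tau^{2}<\kappa^{2}$ is in fact \emph{equivalent} to $r'^{2}<\dfrac{1}{1+\sin^{2}\theta}$, a genuine restriction on how close $r'$ may come to $1$. Hence the identity (3.3) together with $\sin^{2}\theta\leq 1$ cannot close the argument on its own, and the real content must be an extra control keeping $r'$ suitably below $1$. I would therefore try to extract such a bound either from the explicit integrand for $r'$ in Eq.~(3.4) or from a sharper use of the sign relation in Eq.~(3.3), and I expect this control of $r'/\sqrt{1-r'^{2}}$ to be the crux of the statement.
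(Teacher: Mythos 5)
You have reconstructed the paper's argument exactly --- pass to Eq.~(3.3) via Corollary 1, isolate $\sin \theta $, apply $\left\vert \sin \theta \right\vert \leq 1$ --- and the bound you single out as ``the hard part'' is precisely where the paper's own proof goes wrong. The paper writes $\sin \theta =-\frac{\tau }{\kappa }\frac{\sqrt{1-r^{\prime 2}}}{r^{\prime }}$ and then asserts that $0<r^{\prime }<1$ implies $\frac{\sqrt{1-r^{\prime 2}}}{r^{\prime }}>1$, from which $\left\vert \tau /\kappa \right\vert <1$ follows. But that implication is false for $r^{\prime }\in (1/\sqrt{2},1)$: it is exactly the restriction $r^{\prime 2}<\tfrac{1}{2}$ that you isolated, and monotonicity of $r$ plus regularity deliver only $0<r^{\prime }<1$. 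So you have not missed an idea available in the paper; your diagnosis that the identity (3.3) together with $\sin ^{2}\theta \leq 1$ cannot close the argument is correct, and the paper closes it only by an invalid step.

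Your fallback of extracting the missing control from Eq.~(3.4) will not succeed either: substituting $r^{\prime }=\left\vert \tau \right\vert /\sqrt{\tau ^{2}+\kappa ^{2}\sin ^{2}\theta }$ gives $\frac{r^{\prime 2}}{1-r^{\prime 2}}=\frac{\tau ^{2}}{\kappa ^{2}\sin ^{2}\theta }$, so your squared estimate degenerates into the tautology $\tau ^{2}\leq \tau ^{2}/\sin ^{2}\theta $; as you observed, $\tau ^{2}<\kappa ^{2}$ is \emph{equivalent} to $r^{\prime 2}<1/(1+\sin ^{2}\theta )$, and nothing in the hypotheses forces this. In fact the corollary as stated is contradicted by the paper's own Corollary 5(2): take a circular helix with $b>a$ (so $\tau >\kappa >0$) and, say, $\sin \theta =-\tfrac{1}{2}$ with $\cos \theta <0$; then $r(s)=\frac{b}{\sqrt{b^{2}+a^{2}\sin ^{2}\theta }}s+c$ is increasing with $0<r^{\prime }<1$, the relation (3.3) holds (the signs match since $\sin \theta <0$), the surface is regular along that curve because $\kappa g\cos \theta +h^{\prime }-1<0$ there (here $h^{\prime }=r^{\prime 2}<1$ and $\cos \theta <0$), and the $s-$parameter curve is a line of curvature --- yet $\tau >\kappa $. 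The statement becomes true, and your squared estimate proves it in one line, only under the additional hypothesis $r^{\prime }(s)<1/\sqrt{2}$ (equivalently $\sqrt{1-r^{\prime 2}}/r^{\prime }>1$), which is what the paper implicitly, and incorrectly, deduced from the increasing hypothesis alone.
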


\begin{proof}
Assume that $s-$parameter curves are also lines of curvature. Then, from
Corollary (1) we have $g\tau +h\kappa \sin \theta =0$. For the regular canal
surface, $r^{^{\prime }}<1$. If we leave alone sin$\theta $ in Eq (3.3), we
get%
\begin{equation*}
\sin \theta =-\dfrac{\tau }{\kappa }\frac{\sqrt{1-r^{^{\prime }}{}^{2}}}{%
r^{^{\prime }}}.
\end{equation*}%
Additionally, because $r(s)$ is an increasing function, $0<r^{^{\prime }}<1.$
So, it concludes $\dfrac{\sqrt{1-r^{^{\prime }}(s)^{2}}}{r^{^{\prime }}(s)}%
>1 $. If we take absolute value of the above equation and use the last
inequality, since $\left \vert \sin \theta \right \vert \leq 1$ we reach $%
\left \vert \dfrac{\tau }{\kappa }(s)\right \vert <1$. Therefore, for the
spine curve $C(s)$ we obtain $-\kappa (s)<\tau (s)<\kappa (s)$.
\end{proof}

\begin{example}
It can be given an example for $s-$parameter curves that are also lines of
curvature on canal surface as follows.\newline
Vessiot $[4]$ displayed that one family of lines of curvature on canal
surface is 
\begin{equation}
\frac{d\theta }{ds}=-\tau (s)-\kappa (s)\cot \alpha (s)\sin \theta ,\text{ }%
\cos \alpha (s)=-r^{^{\prime }}(s).  \tag{3.5}
\end{equation}%
In the above equation, since $\cos \alpha (s)=-r^{^{\prime }}(s)$, it
follows $\cot \alpha (s)=\dfrac{r^{^{\prime }}(s)}{\sqrt{1-r^{^{\prime }2}(s)%
}}$. Furthermore, if $g=r(s)\sqrt{1-r^{^{\prime }2}(s)}$ and $%
h(s)=r(s)r^{^{\prime }}(s)$ are substituted in the equation $g\tau +h\kappa
\sin \theta =0$, we obtain%
\begin{equation}
\tau (s)\sqrt{1-r^{^{\prime }2}(s)}+r^{^{\prime }}(s)\kappa (s)\sin \theta
=0.  \tag{3.6}
\end{equation}%
If we arrange Eq (3.5), by Eq (3.6) we get%
\begin{equation*}
\frac{d\theta }{ds}=-\frac{\tau (s)\sqrt{1-r^{^{\prime }2}(s)}+r^{^{\prime
}}(s)\kappa (s)\sin \theta }{\sqrt{1-r^{^{\prime }2}(s)}}=0
\end{equation*}%
\begin{equation*}
\theta =\text{constant.}
\end{equation*}%
At last, we view that one family of lines of curvature given in Eq (3.5)
coincides with one family of $s-$parameter curves ($\theta =$constant) on
canal surface for our main equation $g\tau +h\kappa \sin \theta =0$.
\end{example}

\section{Conclusions}

In this paper, we observed when parameter curves are also lines of curvature
for a generalized tube and then we researched this property for canal
surfaces. Surprisingly, we viewed that $\theta -$parameter curves cannot be
lines of curvature on canal surfaces simultaneously. Afterwards, by taking
special spine curves we obtained the radius function of a moving sphere
which generates the canal surface and showed that one family of lines of
curvature concurs with one family of $s-$parameter curves on canal surfaces.


\begin{thebibliography}{99}
\bibitem{gray} A. Gray, Modern Differential Geometry of Curves and Surfaces
with Mathematica, second ed., CrcPress, USA (1999).

\bibitem{gross} A. Gross, Analyzing Generalized Tubes, SPIE, (1994), 422-433.

\bibitem{salkowski} E. Salkowski, Zur Transformation von Raumkurven,
Mathematische Annalen, \textbf{66 (}1909), 517-557.

\bibitem{vessiot} E. Vessiot, Le\c{c}ons de G\'{e}om\'{e}trie Sup\'{e}%
rieure, Librarie Scientifique J. Hermann, Paris (1919).

\bibitem{monterde} J. Monterde, Salkowski curves revisited: A family of
curves with constant curvature and non-constant torsion, Computer Aided
Geometric Design, \textbf{26 (}2009), 271-278.

\bibitem{do carmo} P.M. do Carmo, Differential Geometry of Curves and
Surfaces, Prentice-Hall, Englewood Cliffs, NJ (1976).

\bibitem{garcia} R. Garcia, J. Llibre, J. Sotomayor, Lines of principal
curvature on canal surfaces in $R^{3}$, Anais da Academia Brasileira de Ci%
\^{e}ncias, \textbf{78 (}2006), 405--415.

\bibitem{maekawa} T. Maekawa, N.M. Patrikalakis, T. Sakkalis, G. Yu,
Analysis and applications of pipe surfaces, Computer Aided Geometric Design, 
\textbf{15 (}1998), 437-458.

\bibitem{kühnel} W. K\"{u}hnel, Differential Geometry
Curves-Surfaces-Manifolds, second ed., Friedr. Vieweg \& Sohn Verlag,
Wiesbaden (2003).

\bibitem{xu} Z. Xu, R. Feng and J.G. Sun, Analytic and algebraic properties
of canal surfaces, Journal of Computational and Applied Mathematics, \textbf{%
195 (}2006), 220-228.
\end{thebibliography}
\end{document}